\newtheorem{theorem}{Теорема}
\newtheorem{lemma}{Лемма}
\newtheorem{proposition}{Предложение}
\theoremstyle{definition}
\newtheorem{problem}{Задача}
\theoremstyle{remark}
\newtheorem*{remark}{Замечание}
\newcommand{\FF}{\widetilde{F}}
\title{Многочлены Чебышёва, их замечательные свойства и связь с числами Каталана}
\author{Андрей Рябичев\footnote{\href{https://mailto:ryabichev@179.ru}{ryabichev@179.ru}}, Константин Щербаков}
\date{}
\begin{document}

\maketitle

\begin{abstract}
Основной результат статьи говорит о том, что формальный степенной ряд,
равный отношению двух соседних многочленов Чебышёва,
после некоторой перенормировки приближает производящую функцию чисел Каталана.
Мы приводим доказательство этого результата с использованием цепных дробей,
а также, для полноты и замкнутости изложения,
напоминаем всё необходимое о многочленах Чебышёва и формальных степенных рядах.
\end{abstract}

\section{Введение}

Обозначим через $T_n(x)$ $n$-ый многочлен Чебышёва первого рода
(см.\ определение ниже, в \S\ref{s:chebyshov-def}).
Пусть $c_n$ --- $n$-ое число Каталана, то есть $c_0=c_1=1$, $c_2=2$, $c_3=5$, \ldots

\begin{theorem}\label{th:otnoshenye}
При $n>1$ в степенных рядах
$$y\cdot\frac{T_n\big(\frac1{2y}\big)}{T_{n-1}\big(\frac1{2y}\big)} \ \ \ \text{\it и}\ \ \ \textstyle1-c_0y^2-c_1y^4-c_2y^5-\ldots$$
совпадают младшие коэффициенты до степени $y^{2n-4}$ включительно.
\end{theorem}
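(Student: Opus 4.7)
Основной план --- представить обе сравниваемые функции как решения одного и того же функционального уравнения, после чего совпадение их коэффициентов вытекает из простой индукции. Сначала, из рекуррентного соотношения $T_n(x) = 2xT_{n-1}(x) - T_{n-2}(x)$ непосредственно следует рекурсия для отношения $R_n(x) := T_n(x)/T_{n-1}(x)$, а именно $R_n = 2x - 1/R_{n-1}$. После подстановки $x = \tfrac1{2y}$ и обозначения $S_n(y) := y\,R_n(\tfrac1{2y})$ эта рекурсия принимает вид
$$S_n(y) = 1 - \frac{y^2}{S_{n-1}(y)}, \qquad S_1(y) = \tfrac12,$$
причём при $n\ge 2$ ряд $S_n$ имеет свободный член $1$ и потому обратим в $\mathbb{Q}[[y]]$.

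Далее введём ряд $F(y) := 1 - y^2C(y^2)$, где $C(y) = \sum_k c_ky^k$ --- производящая функция чисел Каталана (это как раз правая часть теоремы). Из классического тождества $C = 1 + yC^2$ элементарным образом получается $(1-F)F = y^2$, откуда $F$ удовлетворяет тому же функциональному уравнению, что и $S_n$:
$$F(y) = 1 - \frac{y^2}{F(y)}.$$

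Вычитая эти два тождества, получаем ключевое соотношение
$$F - S_n = \frac{y^2(F - S_{n-1})}{F\cdot S_{n-1}}.$$
При $n\ge 3$ знаменатель обратим в $\mathbb{Q}[[y]]$, и поэтому каждый шаг рекурсии увеличивает порядок обращения в нуль числителя ровно на $2$. База индукции $n = 2$ проверяется вручную: $F - S_2 = F - (1 - 2y^2) = O(y^2)$. Индукция даёт $F - S_n = O(y^{2n-2})$, что и означает совпадение коэффициентов до степени $y^{2n-4}$ включительно.

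Основная возможная сложность --- аккуратное обоснование всех манипуляций с рациональными выражениями как с формальными степенными рядами. Однако нужные факты (в частности, обратимость всех возникающих знаменателей в $\mathbb{Q}[[y]]$) будут, по-видимому, подготовлены в предыдущих разделах статьи, так что само доказательство теоремы сводится к описанной выше короткой индукции.
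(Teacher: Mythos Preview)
Your argument is correct. Both your proof and the paper's rest on the same two ingredients: the recursion $S_n = 1 - y^2/S_{n-1}$ coming from the Chebyshev recurrence, and the fixed-point identity $F = 1 - y^2/F$ for $F(y) = 1 - y^2C(y^2)$ coming from $C - 1 = xC^2$. The difference is only in packaging. The paper unfolds the recursion into an explicit finite continued fraction for $S_n$ (Proposition~\ref{prop:G}) and an infinite one for $F$, proves a general ``closeness lemma'' for continued fractions of this shape (Lemma~\ref{l:bliz}), and then applies it via Proposition~\ref{prop:low-coeff}. You instead subtract the two functional identities directly to get
\[
F - S_n \;=\; \frac{y^2\,(F - S_{n-1})}{F\,S_{n-1}},
\]
from which the valuation estimate $F - S_n \in y^{2n-2}\,\mathbb{Q}[[y]]$ follows by a one-line induction. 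Your route is shorter and more self-contained for this specific statement; the paper's route yields a reusable lemma about continued fractions of formal power series and makes the link with the infinite continued fraction for $C(x)$ explicit, which suits its expository aims. Your subtraction identity is, in effect, the inductive step of Lemma~\ref{l:bliz} specialised to $b_i = -y^2$, phrased through valuations rather than a geometric-series expansion of $1/(1+\widetilde A)$.
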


Эта теорема, по-видимому, впервые была доказана совсем недавно \cite{a-b-sh}
(см.\ также обсуждение в \cite{b-sh}).
Мы приведём доказательство, похожее по смыслу, но немного более наглядное и эффектное технически.

Также мы постараемся напомнить все необходимые определения и доказать свойства объектов, с которыми будем работать.
А именно, в \S\ref{s:chebyshov-def} мы вводим многочлены Чебышёва, в \S\ref{s:fps} напоминаем определение и основные свойства формальных степенных рядов,
а в \S\ref{s:bliz}-\ref{s:pws} обсуждаем, какой смысл для них может иметь <<сходимость>> бесконечных цепных дробей.
Теорема~\ref{th:otnoshenye} доказывается в \S\ref{s:sootn-catalan}.

\subsubsection*{Благодарности}
Авторы признательны
А.\,В.\,Устинову
за идею доказательства теоремы~\ref{th:otnoshenye},
сообщённую в процессе обсуждении статьи \cite{r-s}.
Также авторы благодарны слушателям лекции 17 февраля 2024 в рамках Дня математика в 179 школе,
перед которыми это доказательство впервые было представлено.

\section{Многочлены Чебышёва и их свойства}\label{s:chebyshov-def}


Прежде чем определять многочлены Чебышёва, мы опишем задачу, в процессе решения которой они исторически возникли.
Назовём {\it уклонением} функции $f:[-1;1]\to\mathbb{R}$ число
$$\|f\|:=\max_{x\in[-1;1]}|f(x)|.$$
Эта буквально величина того, насколько функция $f(x)$ отклоняется от нуля на отрезке $[-1;1]$.

\begin{theorem}\label{th:otklonenye}
Если $f(x)$ --- приведённый многочлен степени $n>0$, то $\|f\|\ge\frac1{2^{n-1}}$.
Более того, существует единственный приведённый многочлен $p(x)$ степени $n$, такой что $\|p\|=\frac1{2^{n-1}}$.
\end{theorem}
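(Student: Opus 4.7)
Идея в том, чтобы явно предъявить приведённый многочлен $p(x)$ степени $n$ с $\|p\|=\frac1{2^{n-1}}$, а затем с помощью классического соображения о чередовании знаков показать, что это единственный приведённый многочлен степени $n$, для которого $\|q\|\le\frac1{2^{n-1}}$~--- откуда одновременно следуют и оценка снизу, и единственность экстремального многочлена.

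В качестве $p$ возьмём $p(x):=T_n(x)/2^{n-1}$. Для обоснования опираюсь на свойства $T_n$, которые естественно проверяются при определении многочленов Чебышёва (и, по-видимому, будут установлены далее в этом же параграфе): во-первых, старший коэффициент $T_n$ равен $2^{n-1}$ (выводится из рекуррентности $T_{n+1}=2xT_n-T_{n-1}$); во-вторых, тождество $T_n(\cos\theta)=\cos n\theta$ показывает, что $|T_n(x)|\le1$ на $[-1;1]$ и $T_n(x_k)=(-1)^k$ в точках $x_k:=\cos(k\pi/n)$, $k=0,\ldots,n$. Значит, $p$~--- приведённый многочлен степени $n$, $\|p\|=\frac1{2^{n-1}}$, причём значения $p(x_k)=\frac{(-1)^k}{2^{n-1}}$ чередуются по знаку в $n+1$ точке.

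Пусть теперь $q$~--- произвольный приведённый многочлен степени $n$ с $\|q\|\le\frac1{2^{n-1}}$, и положим $h:=p-q$. Старшие члены $x^n$ сокращаются, поэтому $\deg h\le n-1$. В каждом узле
\[(-1)^kh(x_k)=\tfrac1{2^{n-1}}-(-1)^kq(x_k)\ge\tfrac1{2^{n-1}}-\|q\|\ge0,\]
то есть знаки $h(x_k)$ нестрого чередуются. По теореме о промежуточном значении в каждом из $n$ отрезков $[x_{k-1},x_k]$ многочлен $h$ имеет нуль, и если подсчитать эти нули с учётом кратностей, суммарно их набирается не меньше $n$. Но ненулевой многочлен степени $\le n-1$ имеет не более $n-1$ нулей; следовательно, $h\equiv0$, то есть $q=p$. Это одновременно даёт и оценку $\|f\|\ge\frac1{2^{n-1}}$ для любого приведённого $f$ степени $n$, и единственность многочлена, на котором достигается равенство.

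Основная техническая тонкость~--- именно аккуратный подсчёт нулей $h$ в случае, когда $h$ обнуляется в некоторых внутренних узлах $x_k$. При строгом чередовании знаков (а это автоматически выполнено, если $\|q\|<\frac1{2^{n-1}}$) достаточно одной теоремы о промежуточном значении, и получаются $n$ различных простых нулей. Ключевое наблюдение для нестрого чередующегося случая: если $h(x_k)=0$ во внутреннем узле, а соседние значения $h(x_{k-1})$ и $h(x_{k+1})$ имеют одинаковый ненулевой знак, то $h$ не меняет знак в окрестности $x_k$, а значит кратность этого нуля чётна, то есть не меньше~$2$. За счёт этого <<недостача>> простых нулей, возникающая при обнулении $h$ в узле, компенсируется повышением кратности; формально аргумент оформляется перебором нескольких случаев (или индукцией по числу узлов, в которых $h$ равен нулю).
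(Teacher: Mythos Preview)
The paper does not actually prove this theorem. After defining $T_n$ it states that the extremal polynomial is $p(x)=\frac{1}{2^{n-1}}T_n(x)$ and then says explicitly: ``Доступное доказательство читатель может найти в статьях \cite{gashkov} и \cite{tabachnikov}, мы не будем его приводить.'' So there is no in-paper argument to compare your proposal against; your sketch is precisely the classical proof those references contain.

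Your outline is sound, but one assertion in the last paragraph is not correct as written. From $h(x_{k-1})>0$, $h(x_k)=0$, $h(x_{k+1})>0$ you \emph{cannot} conclude that $h$ does not change sign near $x_k$ (and hence that the zero at $x_k$ has even multiplicity). A simple counterexample with nodes $-1,0,1$ is $h(x)=x\bigl(x-\tfrac12\bigr)$: here $h(-1)>0$, $h(0)=0$, $h(1)>0$, yet $x_k=0$ is a simple zero. What \emph{is} true --- and what makes the count work --- is that the pair of intervals $[x_{k-1},x_k]\cup[x_k,x_{k+1}]$ always contributes at least two zeros of $h$ counted with multiplicity: either $x_k$ is a zero of order $\ge 2$, or $h$ changes sign at $x_k$ and must then vanish again before returning to the same sign at the neighboring node. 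Phrased this way (or, equivalently, by counting sign changes of the sequence $h(x_0),\dots,h(x_n)$ and handling zero entries via a small perturbation), the case analysis you allude to goes through cleanly.
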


Все $n$ корней этого многочлена $p(x)$ будут распределены по отрезку $[-1;1]$,
а локальные максимумы будут на высоте $\pm\frac1{2^{n-1}}$,
как это кажется необходимым, <<чтобы обеспечить минимальное отклонение>>.
Неожиданным образом, ответ в явном виде помогает получить следующая конструкция.

\begin{theorem}\label{th:chebyshov}
Для каждого $n$ существует многочлен $T_n(x)$ такой, что $T_n(\cos t)=\cos nt$.
\end{theorem}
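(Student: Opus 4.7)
План доказательства — индукция по $n$, опирающаяся на тригонометрическое тождество
$$\cos(n+1)t+\cos(n-1)t=2\cos t\cdot\cos nt,$$
которое сразу получается сложением формул $\cos(\alpha\pm\beta)=\cos\alpha\cos\beta\mp\sin\alpha\sin\beta$ при $\alpha=nt$, $\beta=t$. Именно это тождество и подсказывает нужное рекуррентное соотношение $T_{n+1}(x)=2xT_n(x)-T_{n-1}(x)$.

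В качестве базы индукции я возьму $T_0(x):=1$ и $T_1(x):=x$; тогда действительно $T_0(\cos t)=1=\cos 0$ и $T_1(\cos t)=\cos t=\cos(1\cdot t)$. Далее, предположив, что многочлены $T_{n-1}$ и $T_n$ с нужным свойством уже построены, я определю $T_{n+1}$ по упомянутой рекуррентной формуле. Выражение $2xT_n(x)-T_{n-1}(x)$ — многочлен (это очевидно), а подставив $x=\cos t$, я получу $2\cos t\cdot\cos nt-\cos(n-1)t$, что равно $\cos(n+1)t$ по указанному тождеству. Этим индукционный шаг завершается.

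По сути, всё содержание теоремы сводится к самому тригонометрическому тождеству; никаких серьёзных технических препятствий я не предвижу. При желании здесь же удобно сразу отметить (тоже по индукции, из той же рекуррентной формулы), что $T_n$ при $n\ge 1$ имеет степень ровно $n$ и старший коэффициент $2^{n-1}$ — эти наблюдения естественно пригодятся в связи с теоремой~\ref{th:otklonenye}.
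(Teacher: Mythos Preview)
Твоё доказательство корректно и по существу совпадает с авторским: та же индукция, то же тригонометрическое тождество и та же рекуррентная формула $T_{n+1}(x)=2xT_n(x)-T_{n-1}(x)$; единственное несущественное отличие --- ты берёшь базу $T_0=1$, $T_1=x$, тогда как в статье база --- $T_1=x$, $T_2=2x^2-1$. Замечание о старшем коэффициенте $2^{n-1}$ в статье вынесено в отдельную задачу сразу после доказательства, так что и здесь ты попал в точку.
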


Многочлены $T_n$ называются {\it многочленами Чебышёва первого рода}.
Теорему~\ref{th:chebyshov} легко вывести, например, из формулы Муавра.
Мы используем иную стратегию, заодно получив рекуррентную формулу для $T_n$.

\begin{proof}[Доказательство теоремы~\ref{th:chebyshov}]
Будем действовать по индукции.
Положим $T_1(x)=x$ и \linebreak$T_2(x)=2x^2-1$.
Для произвольного $n>1$ заметим, что
$$\cos((n+1)t)+\cos((n-1)t)=2\cos(nt)\cos t.$$
По предположению индукции $\cos((n-1)t)=T_{n-1}(\cos t)$ и $\cos(nt)=T_n(\cos t)$.
Отсюда
\begin{equation}\label{eq:reccurenta}
T_{n+1}(x)=2xT_n(x)-T_{n-1}(x),
\end{equation}
то есть $T_{n+1}(x)$ действительно является многочленом от $x$.
\end{proof}

Помимо важного рекуррентного соотношения (\ref{eq:reccurenta}), мы получаем следующий факт:

\begin{problem}
Старший коэффициент в $T_n(x)$ равен $2^{n-1}$.
\end{problem}

Оказывается, искомый многочлен в теореме~\ref{th:otklonenye}
задаётся как $p(x)=\frac1{2^{n-1}}T_n(x)$.
Доступное доказательство читатель может найти в статьях \cite{gashkov} и \cite{tabachnikov},
мы не будем его приводить.

Многочлены Чебышёва $T_n(x)$ обладают массой интересных свойств (см., например, \cite{zelevinsky}).
Упомянем одно из них: многочлены Чебышёва являются {\it коммутирующими}
(что среди многочленов, вообще говоря, встречается достаточно редко, см.\ например
\cite[\S10-12]{prasolov-shvartsman}).

\begin{problem}
Докажите, что $T_m(T_n(x))=T_n(T_m(x))$.
\end{problem}

\section{Разложение отношения $T_n/T_{n-1}$ в цепную дробь}

Пусть $g_n(x):=\frac{T_n(x)}{T_{n-1}(x)}$.
Отметим, что $g_n$ является {\it рациональной функцией}, но не многочленом.
Поделив обе части равенства (\ref{eq:reccurenta}) на $T_n(x)$, получаем соотношение

\begin{equation}\label{eq:recc-cep}
g_{n+1}(x) = 2x - \frac1{g_n(x)}.
\end{equation}

Далее, подставляя в выражение (\ref{eq:recc-cep}) для $n+1$ выражение (\ref{eq:recc-cep}) для $n$,
в него выражение (\ref{eq:recc-cep}) для $n-1$, и так далее, мы получаем разложение $g_{n+1}(x)$ в цепную дробь:

\begin{equation}\label{eq:cep-primer}\notag
g_1(x) = x, \ \ \
g_2(x) = 2x - \frac1x, \ \ \
g_3(x) = 2x - \cfrac1{2x-\frac1x}, \ \ \
g_4(x) = 2x - \cfrac1{2x - \cfrac1{2x-\frac1x}}.
\end{equation}

В этом выражении удобно сделать замену $x=\frac1{2y}$ и домножить его на $y$.
Положим
$G_n(y):=y\cdot g_n\big(\frac1{2y}\big)$.
Например, $G_4(y)$ становится такой двухэтажной цепной дробью:

\begin{equation}\label{eq:cep-primer4}\notag
G_4(y) =
y\cdot\left( \frac1y - \cfrac 1{\frac1y - \cfrac1{\frac1y-2y}}\right)=
1 - \cfrac y{\frac1y - \cfrac1{\frac1y-2y}} =
1 - \cfrac{y^2}{1 - \cfrac{y}{\frac1y-2y}} =
1 - \cfrac{y^2}{1 - \cfrac{y^2}{1-2y^2}}.
\end{equation}

Для произвольного $n$
общий вид этой цепной дроби
легко выводится из (\ref{eq:recc-cep}) по индукции:

\begin{proposition}\label{prop:G}
При $n>1$ рациональная функция $G_n(y)$ равна следующей $(n-2)$-этажной цепной дроби:
\begin{equation}\label{eq:cep-gn}
G_n(y) =
1 - \cfrac{y^2}{1 - \cfrac{y^2}{1-\cfrac{y^2}{\ddots-\cfrac{y^2}{1-2y^2}}}}.
\end{equation}
\end{proposition}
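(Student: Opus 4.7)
План доказательства состоит в индукции по $n$ с базой $n=2$, опирающейся на рекуррентное соотношение~(\ref{eq:recc-cep}).

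Сначала я бы вывел удобную рекуррентную формулу уже для самих функций $G_n(y)$. Подставляя $x=\frac1{2y}$ в~(\ref{eq:recc-cep}) (так что $2x=\frac1y$) и домножая обе части на $y$, нетрудно получить
$$G_{n+1}(y) \;=\; 1 - \frac{y^2}{G_n(y)}.$$
Именно эта формула отвечает <<добавлению одного этажа сверху>> в~(\ref{eq:cep-gn}) и служит ключевым инструментом всего доказательства.

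Базой индукции послужит случай $n=2$: прямое вычисление даёт $G_2(y)=y\bigl(\tfrac1y-2y\bigr)=1-2y^2$, то есть ноль-этажную цепную дробь (просто её <<основание>>). Для шага индукции достаточно заметить: если $G_n(y)$ есть $(n-2)$-этажная цепная дробь вида~(\ref{eq:cep-gn}), то по только что полученной формуле $G_{n+1}=1-y^2/G_n$ функция $G_{n+1}(y)$ есть $(n-1)$-этажная цепная дробь того же вида --- мы просто подставляем имеющееся выражение для $G_n$ в знаменатель, получая сверху ещё один <<этаж>>.

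Каких-либо серьёзных технических трудностей я не ожидаю: всё содержательное сводится к аккуратной подстановке $x=\frac1{2y}$ и однократному умножению на $y$; индукция после этого проходит автоматически.
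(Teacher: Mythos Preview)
Ваше предложение корректно и по существу совпадает с тем, что делает статья: там прямо сказано, что формула~(\ref{eq:cep-gn}) <<легко выводится из~(\ref{eq:recc-cep}) по индукции>>, а вычисление для $G_4(y)$ иллюстрирует в точности ту же подстановку $x=\tfrac1{2y}$ с домножением на $y$, которую вы формализуете в виде рекуррентного соотношения $G_{n+1}(y)=1-y^2/G_n(y)$. Ваш текст --- просто аккуратно развёрнутая версия этого краткого указания.
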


\section{Формальные степенные ряды}\label{s:fps}

Напомним, {\it формальным степенным рядом от переменной $x$}
называется выражение вида $a_0+a_1x+a_2x^2+\ldots$
Формальные степенные ряды можно складывать и умножать так же, как многочлены:
для рядов $a_0+a_1x+a_2x^2+\ldots$ и $b_0+b_1x+b_2x^2+\ldots$
коэффициент при $x^k$ в их сумме будет равен $a_k+b_k$,
а в их произведении будет равен $a_kb_0+a_{k-1}b_1+\ldots+a_0b_k$.

Иногда формальные степенные ряды можно и делить.
{\it Отношением} $(a_0+a_1x+\ldots)/(b_0+b_1x+\ldots)$
назовём ряд $h(x)$ такой что $h(x)\cdot(b_0+b_1x+\ldots)=a_0+a_1x+\ldots$
Например, $1/(1+x)=1-x+x^2-x^3+\ldots$,
чтобы убедиться в этом, достаточно
домножить обе части равенства
на $1+x$ и раскрыть скобки.
Однако, отношение определено не всегда:

\begin{problem}\label{prob:powerseries-divide}
Пусть в обозначениях выше $a_0\ne0$.
Тогда $h(x)$ существует если и только если $b_0\ne0$.
Более того, такой ряд $h(x)$ единственный.
\end{problem}

Чтобы избавиться от этого условия и уметь делить любые ряды,
достаточно разрешить конечному числу мономов иметь отрицательную степень.
А именно, {\it рядом Лорана} будем называть формальное выражение вида
$a_{m}x^m+a_{m+1}x^{m+1}+\ldots$, где $m$ --- произвольное целое число.
Сумма, произведение и отношение рядов Лорана определяется аналогично.
Отметим, что любой ряд Лорана можно представить как некоторый формальный степенной ряд,
умноженный на $\frac1{x^k}$ для некоторого неотрицательного $k$.

\begin{problem}\label{prob:loran-divide}
Для любых ненулевых рядов Лорана их отношение
существует и определено однозначно.
\end{problem}

Как видно из последнего утверждения, множество рядов Лорана является {\it полем частных} для кольца формальных степенных рядов.
В дальнейшем мы в основном будем пользоваться лишь формальными степенными рядами 
(для которых важно понятие {\it делимости}), 
а ряды Лорана возникнут только в \S\ref{quad-sootn}.

Подробнее с теорией формальных степенных рядов и их применениями в комбинаторике
читатель может познакомиться, например,
в \cite[гл.\,VIII]{vilenkin} или в \cite{lando}.

\section{Лемма о близости}
\label{s:bliz}

Наша цель --- придать смысл (когда это возможно) бесконечным цепным дробям, составленным из формальных степенных рядов.
Это позволит работать с <<пределом>> рядов $G_n(y)$, определённых выше.
Оказывается, последовательность $G_n$ <<стабилизируется>> в следующем смысле:

\begin{lemma}[лемма о близости]\label{l:bliz}
    Пусть даны две
    $n$-этажные
    цепные дроби вида
    \begin{equation}\label{eq:A(x)}
    A(x)=1 + \cfrac{b_1(x)}{1 + \cfrac{b_2(x)}{\ddots+\cfrac{b_n(x)}{1 + P(x)}}}
    \end{equation}
    и
    \begin{equation}\label{eq:B(x)}
    B(x)=1 + \cfrac{b_1(x)}{1 + \cfrac{b_2(x)}{\ddots+\cfrac{b_n(x)}{1 + Q(x)}}},
    \end{equation}
    где $b_i(x)$, $P(x)$, $Q(x)$ ---
    формальные степенные ряды, имеющие нулевой свободный член.
    Тогда разложения $A(x)$ и $B(x)$ в степенной ряд совпадают в младших членах до $x^n$ включительно.
\end{lemma}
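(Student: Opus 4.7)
План доказательства --- индукция по $n$, опирающаяся на рекурсивную структуру цепных дробей. Выделим верхний этаж:
$$A(x)=1+\frac{b_1(x)}{A'(x)}, \qquad B(x)=1+\frac{b_1(x)}{B'(x)},$$
где $A'$ и $B'$ --- $(n-1)$-этажные цепные дроби того же вида (с теми же $b_2,\ldots,b_n$, но с самыми нижними этажами $1+P$ и $1+Q$ соответственно).

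Прежде всего --- вспомогательное наблюдение, которое гарантирует, что все возникающие формальные деления законны. Индукцией по числу этажей легко показать, что значение любой цепной дроби рассматриваемого вида --- это формальный степенной ряд со свободным членом, равным $1$. Действительно, $1+P(x)$ имеет свободный член $1$; если знаменатель $D(x)$ удовлетворяет этому условию, а $b_i(x)$ имеет нулевой свободный член, то $b_i(x)/D(x)$ --- корректный степенной ряд с нулевым свободным членом (в силу задачи~\ref{prob:powerseries-divide}), и прибавление $1$ снова даёт ряд со свободным членом $1$. В частности, каждый такой знаменатель обратим в кольце формальных степенных рядов.

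Теперь собственно индукция. В качестве базы возьмём $n=1$:
$$A(x)-B(x) = b_1(x)\left(\frac1{1+P(x)}-\frac1{1+Q(x)}\right) = b_1(x)\cdot\frac{Q(x)-P(x)}{(1+P(x))(1+Q(x))}.$$
Числитель делится на $x^2$ (оба сомножителя $b_1$ и $Q-P$ имеют нулевой свободный член), знаменатель же обратим, так что $A-B$ делится на $x^2$. Шаг индукции: по предположению $A'-B'$ делится на $x^n$, а тогда
$$A(x)-B(x) = b_1(x)\cdot\frac{B'(x)-A'(x)}{A'(x)\,B'(x)}$$
делится на $x\cdot x^n=x^{n+1}$, поскольку $A'B'$ имеет свободный член $1$ и потому обратим. Это и означает совпадение $A$ и $B$ в младших членах до $x^n$ включительно.

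Настоящих трудностей здесь нет: всё сводится к аккуратному учёту порядка обнуления в числителе и обратимости знаменателей. Главное --- не забыть про вспомогательное наблюдение о свободном члене, без которого неясно даже, что правые части в выписанных равенствах имеют смысл как формальные степенные ряды.
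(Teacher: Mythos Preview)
Твой доказательство корректно и по существу совпадает с доказательством в статье: та же индукция по числу этажей, то же выделение верхнего этажа $A=1+b_1/A'$, $B=1+b_1/B'$, и тот же переход --- предположение индукции даёт совпадение $A'$ и $B'$ до $x^{n-1}$, а делимость $b_1$ на $x$ поднимает порядок на единицу. Единственное стилистическое различие: в статье знаменатель $1/(1+\widetilde A)$ раскладывается в геометрический ряд, а ты напрямую выписываешь разность $A-B=b_1(B'-A')/(A'B')$ и следишь за порядком нуля --- это чуть аккуратнее, и твоё явное вспомогательное наблюдение про свободный член делает обратимость знаменателей прозрачной (в статье этот момент проговорён менее явно).
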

\begin{proof}
    Будем действовать индукцией по $n$.
    База для $n=0$ следует из того, что оба ряда имеют свободный член $1$.
    %
    Докажем переход от $n-1$ к $n$.
    Для этого запишем
    выражения (\ref{eq:A(x)}) и (\ref{eq:B(x)})
    в виде
    \begin{equation}\label{eq:sum1}
        A(x)=1+\cfrac{b_1(x)}{1+\widetilde{A}(x)}=1+b_1(x)\cdot\left(1-\widetilde{A}(x)+\widetilde{A}(x)^2-\widetilde{A}(x)^3+\ldots\right)
    \end{equation}
    и
    \begin{equation}\label{eq:sum2}
        B(x)=1+\frac{b_1(x)}{1+\widetilde{B}(x)}=1+b_1(x)\cdot\left(1-\widetilde{B}(x)+\widetilde{B}(x)^2-\widetilde{B}(x)^3+\ldots\right).
    \end{equation}
    Здесь ряды $\widetilde{A}(x)$ и $\widetilde{B}(x)$ не содержат свободного члена.

    По предположению индукции, ряды $1+\widetilde{A}(x)$ и $1+\widetilde{B}(x)$ совпадают в членах вплоть до $x^{n-1}$.
    Тогда в рядах в скобках из выражений (\ref{eq:sum1}) и (\ref{eq:sum2})
    после приведения подобных
    совпадут коэффициенты при степенях до $x^{n-1}$.
    Следовательно, поскольку $b_1(x)$ делится на $x$, сами $A(x)$ и $B(x)$ совпадут до $x^{n}$ включительно, что доказывает переход индукции.
\end{proof}

\section{Бесконечные цепные дроби формальных степенных рядов}
\label{s:pws}

Рассмотрим бесконечную цепную дробь вида
\begin{equation}\label{eq:goodfrac}
R(x) := \alpha_0 + \cfrac{b_1(x)}{1 + \cfrac{b_2(x)}{1+\cfrac{b_3(x)}{1 + \ldots}}},
\end{equation}
где $b_i(x)$ ---
формальные степенные ряды с нулевым свободным членом, $\alpha_0$ --- произвольное вещественное число.
Для
$R(x)$ назовём
{\it $n$-й подходящей дробью} следующую $n$-этажную цепную дробь:
\begin{equation}\label{eq:goodfracfinite}
R_n(x) := \alpha_0 + \cfrac{b_1(x)}{1+\cfrac{b_2(x)}{1+\cfrac{b_3(x)}{\ddots+\cfrac{b_{n-1}(x)}{1+b_n(x)}}}},
\end{equation}
Отметим, что $R_n(x)$ корректно задаёт формальный степенной ряд ---
чтобы получить его, нужно $n$ раз воспользоваться утверждением задачи \ref{prob:powerseries-divide}.

Из леммы о близости, разложения в ряд $n$-й и $(n+1)$-й подходящих дробей совпадают до степени $x^n$ включительно.
Значит, для каждого $k$ коэффициент при $x^k$ будет одинаковым
во всех $R_n(x)$ при $n\ge k$.
Это наблюдение можно сформулировать следующим образом.

\begin{theorem}
Бесконечная цепная дробь вида (\ref{eq:goodfrac})
корректно задаёт формальный степенной ряд.
\end{theorem}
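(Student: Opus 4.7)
План доказательства состоит в том, чтобы показать, что последовательность подходящих дробей $R_n(x)$ стабилизируется покоэффициентно, и определить $R(x)$ как формальный степенной ряд, у которого коэффициент при $x^k$ равен общему значению соответствующих коэффициентов во всех $R_n$ при достаточно больших $n$.

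Сначала я проверю, что каждое выражение $R_n(x)$ действительно задаёт формальный степенной ряд. Для этого достаточно $n-1$ раз применить задачу~\ref{prob:powerseries-divide} снизу вверх: в самом нижнем этаже стоит знаменатель $1+b_n(x)$ со свободным членом $1$, поэтому соответствующее деление определено; поднимаясь выше, на каждом шаге в знаменателе оказывается ряд вида $1+{}$(ряд с нулевым свободным членом), что вновь позволяет применять задачу~\ref{prob:powerseries-divide}.

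Далее я сравню $R_n(x)$ и $R_{n+1}(x)$ по лемме о близости. Запишу обе дроби как $(n-1)$-этажные цепные дроби с общими числителями $b_1(x),\ldots,b_{n-1}(x)$, но с разными <<хвостами>> внизу: для $R_n$ это ряд $P(x):=b_n(x)$, а для $R_{n+1}$~--- ряд $Q(x):=b_n(x)/(1+b_{n+1}(x))$. Оба ряда имеют нулевой свободный член: для $Q$ это следует из того, что $b_n(x)$ делится на $x$, а $1/(1+b_{n+1}(x))$~--- корректно определённый формальный степенной ряд со свободным членом $1$. Наличие $\alpha_0$ вместо $1$ в самом верху не мешает применимости леммы: это слагаемое одинаково в обеих дробях и сокращается при сравнении, а значит, рассуждение леммы о близости переносится на этот случай дословно. По лемме получаем, что $R_n(x)$ и $R_{n+1}(x)$ совпадают в коэффициентах при степенях $x^0,\ldots,x^{n-1}$, откуда для каждого $k$ коэффициент при $x^k$ в $R_n$ не меняется при $n\ge k+1$; эти общие значения и определяют искомый ряд $R(x)$.

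Главное техническое препятствие~--- аккуратно подогнать выражение (\ref{eq:goodfracfinite}) под форму леммы о близости (из-за разницы в индексации этажей между (\ref{eq:goodfracfinite}) и (\ref{eq:A(x)})--(\ref{eq:B(x)})) и проверить, что <<хвост>> $Q(x)$ для $R_{n+1}$ действительно имеет нулевой свободный член; остальное следует чисто алгоритмично.
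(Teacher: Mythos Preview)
Ваше рассуждение по существу совпадает с тем, что делает статья: корректность каждой $R_n$ через задачу~\ref{prob:powerseries-divide}, затем покоэффициентная стабилизация последовательности $R_n$ с помощью леммы о близости. Единственное мелкое отличие в том, что в статье $R_n$ трактуется как $n$-этажная дробь (достаточно записать $1+b_n(x)=1+\dfrac{b_n(x)}{1+0}$, т.\,е.\ взять $P=0$), и тогда лемма даёт совпадение $R_n$ и $R_{n+1}$ вплоть до $x^n$, а не до $x^{n-1}$; впрочем, для самой теоремы вашей оценки вполне хватает.
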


{\it Единственность} цепной дроби, задающей формальный степенной ряд,
будет иметь место при дополнительном условии.


\begin{problem}
    Докажите, что для любого степенного ряда существует единственная равная ему
    конечная или бесконечная цепная дробь вида (\ref{eq:goodfracfinite}) или (\ref{eq:goodfrac}),
    в которой все $b_i(x)$ являются одночленами, то есть имеют вид $\beta_ix^{k_i}$
    для некоторых $\beta_i\in\mathbb{R}\setminus\{0\}$ и $k_i>0$.
\end{problem}

\begin{remark}
В более привычной ситуации, когда бесконечная цепная дробь составлена из чисел,
обычно рассматривают дроби с единицами в числителе на каждом этаже --- такая дробь всегда будет сходящейся
(см., например, \cite{khinchin}).

Для цепной дроби, составленной из формальных степенных рядов,
мы наблюдаем противоположный эффект ---
<<сходимость>> дроби,
доказанная в лемме о близости,
обусловлена тем, что на каждом этаже 
числитель делится на $x$.
В качестве же <<элементов>> цепной дроби
(слагаемых, не входящих в дробь на очередном этаже)
мы берём единицы,
чтобы деление было всегда определено.
\end{remark}

\section{Степенной ряд для $G_n(y)$}

Определим
следующую бесконечную цепную дробь:
\begin{equation}\label{eq:g-infty}
G(y) := 1 - \cfrac{y^2}{1 - \cfrac{y^2}{1-\cfrac{y^2}{1 - \ldots}}}
\end{equation}

\begin{proposition}\label{prop:low-coeff}
    Ряд цепной дроби $G(y)$ совпадает с рядом $G_n(y)$ в младших членах до $y^{2n-4}$ включительно.
\end{proposition}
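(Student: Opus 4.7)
The plan is to reduce the claim to Lemma \ref{l:bliz} applied in the variable $x := y^2$, which doubles the order of agreement one gets from the closeness lemma.

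First I would note that both continued fractions under consideration involve $y$ only through $y^2$: setting $x = y^2$, Proposition \ref{prop:G} rewrites $G_n(y)$ as the $(n-2)$-story continued fraction
$$G_n = 1 - \cfrac{x}{1 - \cfrac{x}{\ddots - \cfrac{x}{1 - 2x}}}$$
in $x$, and \eqref{eq:g-infty} rewrites $G(y)$ as its infinite counterpart. In particular both $G_n$ and $G$ are genuine formal power series in $x$.

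Next I would recast these two expressions in the form required by Lemma \ref{l:bliz}. As $(n-2)$-story continued fractions they share the entries $b_i(x) = -x$ for $i = 1, \ldots, n-2$; the bottom of $G_n$ is $1 + P(x)$ with $P(x) = -2x$, while, by the self-similarity $G = 1 - x/G$ of the infinite fraction, the bottom of $G$ below its $(n-2)$-nd level is $1 + Q(x)$ with $Q(x) = G - 1$. All of $b_i$, $P$, $Q$ have zero constant term in $x$, so Lemma \ref{l:bliz} applies and gives agreement of $G_n$ and $G$, as formal power series in $x$, up to and including $x^{n-2}$.

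Substituting back $x = y^2$ turns this into agreement in $y$ up to and including $y^{2(n-2)} = y^{2n-4}$, which is the claim. There is no serious obstacle; the only substantive observation is that because every numerator in both continued fractions equals $-y^2$, working in the variable $x = y^2$ doubles the exponent supplied by the closeness lemma, turning the naive bound $y^{n-2}$ into $y^{2n-4}$.
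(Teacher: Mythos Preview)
Your argument is correct and essentially identical to the paper's: both proofs pass to the variable $x=y^2$, view $G_n$ and $G$ as $(n-2)$-story continued fractions with common numerators $-x$ and differing bottom levels, invoke Lemma~\ref{l:bliz} to obtain agreement through $x^{n-2}$, and then substitute back. Your version is slightly more explicit in naming $P(x)=-2x$ and $Q(x)=G-1$ (via the self-similarity $G=1-x/G$), whereas the paper phrases the comparison in terms of the $(n-2)$-nd convergent of $G$, but the content is the same.
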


\begin{proof}
    Поскольку $y$ входит в $G(y)$ и $G_n(y)$ только в чётных степенях,
    на эти выражения можно смотреть как на цепные дроби от $t=y^2$.
    Напомним, что $G_n(t)$ имеет $n-2$ этажа
    и отличается от $(n-2)$-й подходящей дроби для $G(t)$ лишь в последнем знаменателе.
    Следовательно, по лемме о близости,
    ряды $G_n(t)$ и $G(t)$
    совпадают до степени $t^{n-2}$.
    Значит, $G_n(y)$ и 
    $G(y)$ при разложении в ряд совпадают до степени $y^{2n-4}$.
\end{proof}

\section{Производящая функция для чисел Каталана}
\label{s:sootn-catalan}

Напомним, {\it $n$-ое число Каталана} $c_n$ равно количеству правильных скобочных структур из $n$ пар скобок.
Числа Каталана имеют много эквивалентных определений и массу интересных свойств
(см., например, \cite{gardner} или \cite[\S2.5]{lando}).
Нам потребуется одно из них, легко выводящееся из приведённого определения.

\begin{problem}\label{prob:recurrenta-catalan}
Имеется следующее соотношение: $c_{n+1}=c_0c_n+c_1c_{n-1}+\ldots+c_nc_0$.
\end{problem}

Рассмотрим {\it производящую функцию} для чисел Каталана,
то есть формальный степенной ряд
$C(x):=c_0+c_1x+c_2x^2+\ldots$

\begin{proposition}\label{prop:sootn-catalan}
Формальный степенной ряд $C(x)$ удовлетворяет уравнению
\begin{equation}\label{eq:sootn-catalan}
C(x)-1=x\cdot\big(C(x)\big)^2.
\end{equation}
\end{proposition}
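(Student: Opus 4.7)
The plan is to combine the definition of multiplication of formal power series from \S\ref{s:fps} with the Catalan recurrence stated in Problem~\ref{prob:recurrenta-catalan}. First I would examine the coefficient of $x^n$ in the square $\big(C(x)\big)^2$: by the multiplication rule for formal power series, this coefficient equals
$$\sum_{k=0}^{n} c_k c_{n-k} = c_0 c_n + c_1 c_{n-1} + \ldots + c_n c_0,$$
which by Problem~\ref{prob:recurrenta-catalan} is precisely $c_{n+1}$.

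Hence $\big(C(x)\big)^2 = c_1 + c_2 x + c_3 x^2 + \ldots$, and multiplying by $x$ shifts all indices by one, giving $x\cdot\big(C(x)\big)^2 = c_1 x + c_2 x^2 + c_3 x^3 + \ldots$. On the other hand, since $c_0=1$, we also have $C(x)-1 = c_1 x + c_2 x^2 + c_3 x^3 + \ldots$, so the two sides of (\ref{eq:sootn-catalan}) agree coefficient by coefficient.

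I do not foresee any real obstacle here: the substantive combinatorial content has already been absorbed into the recurrence of Problem~\ref{prob:recurrenta-catalan}, and this proposition is a mechanical translation of that recurrence into the language of generating functions. The only subtle point worth mentioning is that equality of formal power series is by definition coefficient-wise equality, so no convergence issues arise, and the argument applies verbatim inside the ring of formal power series introduced in \S\ref{s:fps}.
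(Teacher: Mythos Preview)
Your proof is correct and follows exactly the same route as the paper: the paper's own proof is a single sentence stating that the identity follows directly from the definition of multiplication of formal power series together with the recurrence of Problem~\ref{prob:recurrenta-catalan}, and you have simply written out that computation explicitly.
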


\begin{proof}
Это прямо следует из определения умножения формальных степенных рядов и утверждения задачи~\ref{prob:recurrenta-catalan}.
\end{proof}

Уравнение (\ref{eq:sootn-catalan}) можно переписать в виде
$1-1/C(x)=x\cdot C(x)$,
далее преобразовать к виду
$1/C(x)=1-x\cdot C(x)$,
и в итоге получить
$C(x)=1/(1-x\cdot C(x))$.

Заметим, что последнее равенство устанавливает <<самоподобие>> производящей функции $C(x)$:
\begin{equation}\label{eq:gener-catalan}
C(x)=
\cfrac1{1-x\cdot C(x)}=
\cfrac1{1-\cfrac{x}{1-x\cdot C(x)}}=
\cfrac1{1-\cfrac{x}{1-\cfrac{x}{1-x\cdot C(x)}}}=\ldots
\end{equation}

По лемме о близости и из представления (\ref{eq:gener-catalan}) видно, что ряд $C(x)$, совпадает со значением бесконечной цепной дроби
\begin{equation}\label{eq:cepn-catalan}
\cfrac1{1-\cfrac{x}{1-\cfrac{x}{1-\ldots}}}.
\end{equation}
Действительно,
$k$-этажная цепная дробь, аналогичная (\ref{eq:gener-catalan}),
по лемме о близости совпадает с (\ref{eq:cepn-catalan}) до степени $x^{k-1}$ включительно
(степень на единицу меньше количества этажей, поскольку числитель первого этажа не делится на $x$).

\begin{proof}[Доказательство теоремы~\ref{th:otnoshenye}]
Сравнивая (\ref{eq:g-infty}) и (\ref{eq:cepn-catalan}),
мы получаем соотношение $G(y)=1-y^2\cdot C(y^2)$.
Чтобы завершить доказательство теоремы~\ref{th:otnoshenye},
остаётся воспользоваться
определением $G_n(y)=y\cdot T_n\big(\frac1{2y}\big)/T_{n-1}\big(\frac1{2y}\big)$
и предложением~\ref{prop:low-coeff}.
\end{proof}


\section{Дополнение}

\subsection{Цепные дроби и квадратичные соотношения}
\label{quad-sootn}

Отметим, что соотношение (\ref{eq:sootn-catalan}) является квадратным уравнением с коэффициентами в рядах Лорана относительно $C(x)$.
Поэтому кроме корня $C(x)$, являющегося производящей функцией последовательности чисел Каталана, у него есть второй корень, равный по теореме Виета $\widetilde{C}(x)=1/x-C(x)$. Оба этих ряда будут удовлетворять равенствам (\ref{eq:gener-catalan}).

Ясно, что значение бесконечной цепной дроби (\ref{eq:cepn-catalan}) должно быть корнем квадратного уравнения (\ref{eq:sootn-catalan}), причём последовательность подходящих дробей будет сходится именно к корню, не содержащему отрицательных степеней $x$. <<Исчезновение>> второго корня связанно с тем, что лемма о близости не будет верна, если заменить в ней степенные ряды на ряды Лорана
(а потому
в цепочке (\ref{eq:gener-catalan}) <<стабилизации коэффициентов>> не происходит).

В то же время,
мы могли бы выразить $C(x)$ как $1/x - 1/(x\cdot C(x))$.
Из этого выражения получилась бы бесконечная цепная дробь,
значение которой совпадает со вторым корнем уравнения (\ref{eq:sootn-catalan}).

Отметим, что всякая периодическая цепная дробь соответствует некоторой квадратичной иррациональности, и наоборот, по всякой квадратичной иррациональности можно построить соответствующую периодическую бесконечную цепную дробь. Подробнее про эту связь в случае приближения вещественных чисел можно прочитать в \cite[гл.\,II,~\S10]{khinchin}.



\subsection{Цепная дробь для отношения соседних чисел Фибоначчи}

Напомним ещё одно похожее рассуждение.
Пусть $F_0,F_1,\ldots$ --- последовательность чисел Фибоначчи.
Можно переписать определяющее их соотношение $F_{n+1}=F_n+F_{n-1}$
в виде
\begin{equation}\notag
\frac{F_{n+1}}{F_n} =
1+\cfrac1{\frac{F_n}{F_{n-1}}} =
1+\cfrac1{1+\cfrac1{\frac{F_{n-1}}{F_{n-2}}}} = \ldots
\end{equation}
Отсюда следует известное соотношение

\begin{equation}\notag
\lim_{n\to\infty}\frac{F_{n+1}}{F_n} =
1+\cfrac1{1+\cfrac1{1+\ldots}} = \varphi,
\end{equation}
где $\varphi$ --- золотое сечение, то есть положительный корень уравнения $1+\frac1\varphi=\varphi$.

Наше доказательство теоремы~\ref{th:otnoshenye} по сути аналогично ---
используя рекуррентное соотношение для многочленов Чебышёва,
мы раскладываем их отношение в периодическую цепную дробь,
а она в свою очередь равна известной производящей функции $C(x)$,
поскольку удовлетворяет тому же квадратному уравнению (\ref{eq:sootn-catalan}).

\end{document}